\newcommand{\Ccl}{\mathbb{C}\ell}
\newcommand{\Cl}{\mathcal{C}\ell}
\newcommand{\mb}{\mathbb}
\newcommand{\Sp}{\mbox{Spin}}
\newcommand{\Spc}{\mbox{Spin}^{\mb{C}}}
\newtheorem{theorem}{Theorem}
\newtheorem{lemma}[theorem]{Lemma}
\begin{document}

%-------------------------------------------------------------------------
% editorial commands: to be inserted by the editorial office
%
%\firstpage{1} \volume{228} \Copyrightyear{2004} \DOI{003-0001}
%
%
%\seriesextra{Just an add-on}
%\seriesextraline{This is the Concrete Title of this Book\br H.E. R and S.T.C. W, Eds.}
%
% for journals:
%
%\firstpage{1}
%\issuenumber{1}
%\Volumeandyear{1 (2004)}
%\Copyrightyear{2004}
%\DOI{003-xxxx-y}
%\Signet
%\commby{inhouse}
%\submitted{March 14, 2003}
%\received{March 16, 2000}
%\revised{June 1, 2000}
%\accepted{July 22, 2000}
%
%
%
%---------------------------------------------------------------------------
%Insert here the title, affiliations and abstract:
%

\title{{Immersion in $\mathbb{S}^n$ by complex spinors}}

\author{Rafael de Freitas Le\~ao and Samuel Augusto Wainer
}

\maketitle

%----------additions
%\dedicatory{To my boss}
%%% ----------------------------------------------------------------------

\begin{abstract}

Since the first work of Thomas Friedrich showing that isometric immersions of Riemann surfaces are related to spinors and the Dirac equation, various works appeared generalizing this approach to more general $\Sp$-manifolds, in particular the case of submanifolds of $\Sp$-manifolds of constant curvature. In the present work we investigate the case of submanifolds of $\Spc$-manifolds of constant curvature.

\textbf{Keywords:} Immersion, Spinors, Clifford Algebras
\end{abstract}

%%% ----------------------------------------------------------------------
\maketitle
%%% ----------------------------------------------------------------------
%\tableofcontents
\section{Introduction}

A century long classical problem on Differential Geometry is the study of isometric immersions of riemannian manifolds. Classically, this problem is studied using generalized forms of the Gauss-Codazzi equations. But in the special case of riemann surfaces there is the approach of the Weierstrass map using complex analisys. Recently, this problem gained a new impetus when Friedrich, \cite{friedrich98}, discovered that the eierstrass map can be described using spinors.

Since than, numerous works appeared, \cite{morel04,lawn08,lawnroth10,lawnroth11,bayard13,bayard13b,bayard17}, showing how Dirac equations, spinors, Gauss-Codazzi equations and isometric immersions are related. In particular, Bayard et al, \cite{bayard17}, showed how to generalize de concept of the spinorial Weierstrass map to arbitrary dimensional spin manifolds.

In, \cite{leaowainer2018}, we argued that in certain contexts, particularly for complex manifolds, the hypothesis of a $\Sp$-structure is somewhat restrictive, being more natural to consider $\Spc$-structures, and showed how the Weierstrass map constructed by Bayard can be adapted to this case.

On \cite{bayard17}, spinor techniques are also used to investigate the more general problem of isometric immersions of manifolds on manifolds of constant curvature. As usual, to do follow the spinor approach we must assume that the manifolds involved carry a $\Sp$-structure and, again, there are some cases, like complex manifolds, where this assumption is more restrictive than the assumption of a $\Spc$-structure. In the present work we will consider Spinorial Representation of Submanifolds in $\mathbb{S}^{n}$. In particular we prove the following theorem:

\begin{theorem}
	Let $M$ $p$-dimensional manifold, $E\rightarrow M$ a vector bundle of rank $q$, assume that $TM$ and $E$ are oriented and $Spin^{\mathbb{C}}.$ Suppose that $B:TM\times TM\rightarrow E$ is symmetric and bilinear. The following are equivalent:
	
	\begin{enumerate}
		\item There exist a section $\varphi \in \Gamma (S\sum\nolimits^{\mathbb{C}})$ such that 
		\begin{equation*}
		\nabla _{X}^{\Sigma ^{\mathbb{C}}}\varphi =-\frac{1}{2}\sum_{i=1}^{p}e_{i}\cdot B(X,e_{i})\cdot \varphi +\frac{1}{2} X \cdot \nu \cdot \varphi +\frac{1}{2}\mathbf{i}A^{l}(X)\cdot
	\varphi  ,~~\forall
		X\in TM.  
		\end{equation*}
		
		\item There exist an isometric immersion $F:M\rightarrow \mathbb{S}^{n }$ with normal bundle $E$ and second fundamental form $B$.
	\end{enumerate}
	
	Furthermore, $F=\left\langle \left\langle \nu \cdot \varphi , \varphi \right\rangle \right\rangle \in \mathbb{S}^n \subset \mathbb{R}^{n+1}.$
	
\end{theorem}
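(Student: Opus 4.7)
The overall plan is to prove the two implications separately, leveraging that $\mathbb{S}^{n}$ sits inside $\mathbb{R}^{n+1}$ as a totally umbilical hypersurface whose outward unit normal at a point $p$ is $p$ itself. This geometric observation reduces the problem to two successive applications of the spinorial Gauss formula for the chain $M\hookrightarrow \mathbb{S}^{n}\hookrightarrow \mathbb{R}^{n+1}$, combined with the $\Spc$ modifications developed in \cite{leaowainer2018}. In particular, the symbol $\nu$ in the spinor equation is naturally interpreted as the distinguished unit section of the $\mathbb{R}$-factor in the ambient bundle $TM\oplus E\oplus \mathbb{R}\nu$, so that Clifford multiplication satisfies $\nu\cdot\nu = -1$.

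For $(2)\Rightarrow(1)$, I would start from a parallel $\Spc$ spinor $\Psi$ on the flat space $\mathbb{R}^{n+1}$. Restricting to $\mathbb{S}^{n}$ and using the spinorial Gauss formula for an umbilical hypersurface produces the Killing-type identity $\nabla^{\mathbb{S}^{n}}_{X}\Psi = \tfrac12 X\cdot\nu\cdot\Psi$, where $\nu$ corresponds to the outward position vector. A second application of the spinorial Gauss formula, now for $M\hookrightarrow \mathbb{S}^{n}$, produces the term $-\tfrac12\sum_{i}e_{i}\cdot B(X,e_{i})\cdot\varphi$. The remaining term $\tfrac12\mathbf{i}A^{l}(X)\cdot\varphi$ comes from the connection on the auxiliary $U(1)$ line bundle of the $\Spc$ structure on $TM\oplus E$, exactly as in the $\Spc$-adapted Weierstrass construction recalled above.

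For $(1)\Rightarrow(2)$, I would define $F(x) := \langle\langle \nu\cdot\varphi,\varphi\rangle\rangle$ and verify three properties: (a) $|F|\equiv 1$, so that $F\colon M\to\mathbb{S}^{n}\subset\mathbb{R}^{n+1}$, using $\nu\cdot\nu=-1$ together with the Hermitian symmetry of $\langle\langle\cdot,\cdot\rangle\rangle$; (b) computing $dF(X) = \langle\langle\nabla_{X}(\nu\cdot\varphi),\varphi\rangle\rangle + \langle\langle\nu\cdot\varphi,\nabla_{X}\varphi\rangle\rangle$, substituting the spinor equation, and showing that under the splitting $F^{*}T\mathbb{R}^{n+1}\cong TM\oplus E\oplus \mathbb{R}\nu$ the result sits in the $TM$ factor and is identified with $X$ itself; (c) performing the analogous computation with sections of $E$ inserted into the bracket to identify $E$ with the normal bundle of $F(M)$ in $\mathbb{S}^{n}$ and to recognise $B$ as the second fundamental form.

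The main obstacle is step (b). Once the spinor equation is plugged into $dF(X)$, three groups of contributions appear—the $B$-term, the Killing $\nu$-term, and the auxiliary $\mathbf{i}A^{l}$-term—and they must conspire so that all non-tangential pieces cancel and the tangential piece is exactly $X$. The cancellation of the $\mathbf{i}A^{l}(X)$ contributions from the two arguments of $\langle\langle\cdot,\cdot\rangle\rangle$ relies on the anti-Hermiticity of multiplication by $\mathbf{i}$, while the cancellation of the $B$-terms against the normal component of the $\nu$-terms uses the antisymmetry of Clifford multiplication under $\langle\langle\cdot,\cdot\rangle\rangle$ on real vectors. The remaining algebra is then the $\Spc$-analogue of the spherical computation of \cite{bayard17}, and the integrability/closure of the reconstructed position vector is automatic because the left-hand side $dF$ is exact.
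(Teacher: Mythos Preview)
Your proposal is correct and follows essentially the same strategy as the paper: for $(2)\Rightarrow(1)$ a double application of the spinorial Gauss formula along $M\hookrightarrow\mathbb{S}^n\hookrightarrow\mathbb{R}^{n+1}$ starting from the constant spinor on $\mathbb{R}^{n+1}$, and for $(1)\Rightarrow(2)$ defining $F=\langle\langle\nu\cdot\varphi,\varphi\rangle\rangle$ directly (so no integrability issue arises), computing $dF(X)=\xi(X)=\langle\langle X\cdot\varphi,\varphi\rangle\rangle$, and then checking that $\xi$ is an isometry carrying $E$ to the normal bundle and $B$ to the second fundamental form.

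One small correction to your description of the cancellations in step~(b): the $B$-terms do not cancel \emph{against} the $\nu$-terms. The paper writes $dF(X)=(Id-\tau)\langle\langle\nu\cdot\nabla_X\varphi,\varphi\rangle\rangle$ and observes that $\nu\cdot e_i\cdot B(X,e_i)$ (a product of three mutually orthogonal vectors) and $\mathbf{i}A^l(X)\,\nu$ are each $\tau$-fixed and hence annihilated by $Id-\tau$ on their own; meanwhile $\nu\cdot X\cdot\nu=X$ is $\tau$-reversed, so the $\nu$-term alone survives and yields $\xi(X)$. This does not affect the validity of your outline, but it is the actual mechanism you will find when you carry out the computation.
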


\section{Recalling some concepts}

In the previous work, \cite{leaowainer2018}, we showed how to generalize de Weierstrass map obtained by Bayard et all to the case of $\Spc$-manifolds, in particular every almost complex manifold. In this work we are interested in understanding if the $\Spc$-hypothesis is also true the case of submanifolds of manifolds of constant curvature. Therefore, in this section we will recall some concepts already presented in \cite{leaowainer2018}. 

\subsection{Adapted structures}

Let $H \rightarrow M$ be a hermitian vector bundle over $M$. A $\Spc$-structure on $H$ is defined by the following double covering

\begin{displaymath}
\xymatrixcolsep{1pc}\xymatrixrowsep{1pc}\xymatrix{ & Spin_{n}^{\mathbb{C}}
		\ar[rr]^{p^{\mathbb{C}}=\lambda^{\mathbb{C}} \times l^{\mathbb{C}}} \ar@{^{(}->}[dd] & & SO_n \times S^1 \ar@{^{(}->}[dd] \\ \mathbb{Z}_{2}
		\ar[ru] \ar[rd]& & &\\ & P_{Spin_{n}^{\mathbb{C}}}(H) \ar[rr]^{\Lambda^{\mathbb{C}}}
		\ar[rd]_{\pi^{\prime}} & & P_{SO_{n}}(H) \times_M P_{S^1}(H) \ar[ld]^{\pi} \\ & & M & }
\end{displaymath}
where $\Spc$ is the group defined by

\begin{displaymath}
	Spin_n^{\mathbb{C}} = \frac{Spin_n \times S^1}{ \{ (-1,-1) \} }, 
\end{displaymath}
and $S^1=U(1) \in \mathbb{C}$ is understood as the unitary complex numbers. As usual, a $\Spc$-structure can be viewed as a lift of the transition functions of $H$, $g_{ij}$, to the group $\Spc$, $\tilde{g}_{ij}$, but now the transition functions are classes of pairs $\tilde{g}_{ij} = \left[ (h_{ij},z_{ij}) \right]$, where $h_{ij}: U_i \cap U_j \rightarrow Spin_n$ and $z_{ij}: U_i \cap U_j \rightarrow S^1=U(1)$. 

The identity on $\Spc$ is the class $\left\{ (1,1), (-1,-1) \right\}$. Because of this, neither $h_{ij}$ or $z_{ij}$ must satisfy the cocycle condition, only the class of the pair. But, $z_{ij}^2$ satisfies the cocycle condition and defines a complex line bundle $L$, associated with the $P_{S^1}$ principal bundle in the above diagram, called the determinant of the $\Spc$-structure.

The description using transition functions is useful to make clear that $\Spc$-structures are more general than $\Sp$-structures. In fact, given a $\Sp$-structure $P_{\Sp}(H) \rightarrow P_{SO}(H)$ we immediately get a $\Spc$-structure by considering $z_{ij}=1$, in other words, by considering the trivial bundle as the determinant bundle of the structure. On the other hand \cite{hitchin}, a $\Spc$-structure produces a $\Sp$-structure iff the determinant bundle has a square root, that is, the functions $z_{ij}$ satisfies the cocycle condition. 

%(A rigor exise uma cobertura onde elas satisfazem, mas nÃ£o sei como escrever)

Another way where $\Spc$-structures are natural is  when we consider an almost complex manifold $(M,g,J)$. In this case the tangent bundle can be viewed as an $U(n)$ bundle, and the natural inclusion $U(n) \xhookrightarrow{} SO(2n)$ produces a canonical $\Spc$-structure on the tangent bundle \cite{friedrich00,nicolaescu}. For this canonical structure the determinant bundle is identified with $\wedge^{0,n} M$ and the spinor bundle constructed using an irreducible complex representation of $\Cl(2n)$ is isomorphic with $\wedge^{0,*}M = \oplus_{k=0}^n \wedge^{0,k}M$. So, various structures on spinors can be described using know structures of $M$.

Unlike the usual case for $\Sp$-structures, a metric connection on $H$ is not enough to produce a connection on $P_{\Spc}(H)$, for this, we also need a connection on the determinant bundle of the structure to get a connection on $P_{SO}(H) \times P_{S^1}(H)$ and be able to lift this connection to $P_{\Spc}(H)$.

To understand the problem of immersions using the Dirac equation in the case of $\Spc$-structures, and spinors associated to this structure, we need to understand adapted $\Spc$-structures on submanifolds. The difference to the standard $\Sp$ case is that we need to keep track of the determinant bundle. Using the ideas of \cite{bar98}, we can describe the adapted structure.

Consider a $\Spc$ $n$-dimensional manifold $Q$ and a isometrically immersed $p$-dimensional $\Spc$ submanifold $M \xhookrightarrow{} Q$. Let
\begin{displaymath}
	\begin{split}
		&P_{\Spc_{n}}(Q) \xrightarrow{\Lambda^Q} P_{SO_{n}}(Q) \times P_{S^1}(Q) \\
		& \left. P_{\Spc_{n}}(Q) \right|_M \xrightarrow{\Lambda^Q}  P_{SO_{n}}(Q) \bigg|_M \times P_{S^1}(Q) \\
		&P_{\Spc_p}(M) \xrightarrow{\Lambda^M} P_{SO_p}(M) \times P_{S^1}(M)
	\end{split}
\end{displaymath}
be the corresponding $\Spc$-structures. And let the cocycles associated to this structures be, respectively, $\tilde{g}_{\alpha \beta}$, $\left. \tilde{g}_{\alpha \beta}\right|_M  $ and $\tilde{g}_{\alpha \beta}^1$. If we define the functions $\tilde{g}_{\alpha \beta}^2$ by
\begin{displaymath}
	\tilde{g}_{\alpha \beta}^1 \tilde{g}_{\alpha \beta}^2 = \left. \tilde{g}_{\alpha \beta} \right|_M
\end{displaymath}
it is easy to see, using an adapted frame, that the two sets of functions $\tilde{g}_{\alpha \beta}^1$ and $\tilde{g}_{\alpha \beta}^2$ commutes. This implies that $\tilde{g}_{\alpha \beta}^2$ satisfies the cocycle condition, because both $\tilde{g}_{\alpha \beta}$ and $\tilde{g}_{\alpha \beta}^1$ satisfies. The cocycles $\tilde{g}_{\alpha \beta}^2$ are exactly the $\Spc$-structure for the normal bundle $\nu(M)$. With this construction, if $L$, $L_1$ and $L_2$ denotes, respectively, the determinant bundle of the $\Spc$-structure of $Q$, $M$ and $\nu(M)$ we have the relation
\begin{displaymath}
	L = L_1 \otimes L_2
\end{displaymath}

%COmentar que atravÃ©s desta descriÃ§Ã£o Ã© que induzimos a conexÃ£o em spinores

Knowing that $\nu(M)$ has a natural $\Spc$-structure we can use the left regular representation of $\Ccl(n)$ on itself to construct the following $\Spc$-Clifford bundle (this bundles will act as spinor bundles)
\begin{equation*}
	\begin{split}
		\Sigma^{\mathbb{C}}Q &:= P_{Spin^{\mathbb{C}}_{n}(Q)} 
			\times_{\rho_{n}} \mathbb{C}l_{n}, \\
		\left. \Sigma^{\mathbb{C}}Q \right|_M &:= \left.  P_{Spin^{\mathbb{C}}_{n}(Q)} \right|_M
		\times_{\rho_{n}} \mathbb{C}l_{n}, \\
		\Sigma^{\mathbb{C}} M &:= P_{Spin^{\mathbb{C}}_p(M)}\times _{\rho_{p}}\mathbb{C}l_{p}, \\
		\Sigma^{\mathbb{C}} \nu(M) &:=P_{Spin^{\mathbb{C}}_q \nu(M)} 
		\times _{\rho _{q}} \mathbb{C}l_{q}.
	\end{split}
\end{equation*}

Using the isomophism $\mathbb{C}l_p \hat{\otimes} \mathbb{C}l_q \simeq \mathbb{C}l_{n}$ and standard arguments, \cite{bar98}, we get the relation
\begin{equation*}
	\Sigma^{\mathbb{C}} Q \mid_M \simeq \Sigma^{\mathbb{C}} M \hat{\otimes} \Sigma^{\mathbb{C}} \nu(M) =: \Sigma^{\mathbb{C}}.
\end{equation*}

Let $\nabla ^{\Sigma ^{\mathbb{C}}Q},\nabla ^{\Sigma ^{\mathbb{C}}M}$ and $\nabla ^{\Sigma ^{\mathbb{C}}\nu}$ be the connection on $\sum^{\mathbb{C}}Q,\sum^{\mathbb{C}}M$ and $\sum^{\mathbb{C}}\nu(M)$ respectively, induced by the  Levi-Civita connections of $P_{SO_{n}}(Q)$, $P_{SO_{p}}(M)$, and $P_{SO_{q}}(\nu)$. We denote the connection on $\sum\nolimits^{\mathbb{C}}$ by
\begin{equation*}
\nabla ^{\Sigma ^{\mathbb{C}}} = \nabla ^{\Sigma ^{\mathbb{C}}M\otimes \Sigma^{\mathbb{C}} \nu}:=\nabla
^{\Sigma ^{\mathbb{C}}M}\otimes Id+Id\otimes \nabla ^{\Sigma ^{\mathbb{C}}\nu}.
\end{equation*}

The connections on these bundle are linked by the following Gauss formula:
\begin{equation} \label{gaussformula}
\nabla_{X}^{\Sigma^{\mathbb{C}} Q} \varphi =  \nabla_{X}^{\Sigma^{\mathbb{C}} } \varphi + \frac{1}{2} \sum _{i=1}^{p} e_i \cdot B(e_i,X) \cdot \varphi ,
\end{equation}
where $B:TM \times TM \rightarrow \nu(M)$ is the second fundamental form and $\left\{ e_1 \cdots e_p \right\}$ is a local orthonormal frame of $TM$. Here ``$\cdot $'' is the Clifford multiplication on $\Sigma^{\mathbb{C}}Q$.

Note that if we have a parallel spinor $\varphi$ in $\Sigma ^{\mathbb{C}}Q$, for exemple if $Q=\mathbb{R}^{n}$, then Eq.(\ref{gaussformula}) implies the following generalized Killing equation
\begin{equation}
    \nabla_{X}^{\Sigma^{\mathbb{C}} } \varphi =- \frac{1}{2} \sum _{i=1}^{p} e_i \cdot B(e_i,X) \cdot \varphi .
\end{equation}

\subsection{A $\mathbb{C}l_{n}$-valued inner product}

To obtaining an immersion using spinors that satisfies certain equations, we need the following $\Ccl_n$-valued inner product

\begin{eqnarray*}
		\tau  :\Ccl_{n}& \rightarrow &\Ccl_{n} \\
		\tau (a~e_{i_{1}}e_{i_{2}}\cdots e_{i_{k}}) &:=&(-1)^{k}\bar{a}
			~e_{i_{k}}\cdots e_{i_{2}}e_{i_{1}}, \\
		\tau (\xi) &:=& \overline{\xi}	
	\end{eqnarray*}

\begin{equation*}
	\begin{split}
		\left\langle \left\langle \cdot ,\cdot \right\rangle \right\rangle:
			\mathbb{C}l_{n}\times \mathbb{C}l_{n} &\rightarrow \mathbb{C}l_{n} \\
		(\xi _{1},\xi _{2}) &\mapsto \left\langle \left\langle \xi _{1}, 
			\xi_{2}\right\rangle \right\rangle =\tau (\xi _{2})\xi _{1}.
	\end{split} \label{product}
\end{equation*}

\begin{equation*}
	\begin{split}
	\left\langle \left\langle (g\otimes s)\xi _{1},(g\otimes s) 
		\xi_{2}\right\rangle \right\rangle  &=s\overline{s}\tau 
		(\xi _{2})\tau (g)g \xi_{1}=\tau (\xi _{2})\xi _{1} = 
		\left\langle \left\langle \xi _{1}, \xi_{2}\right\rangle 
		\right\rangle , \\
	g\otimes s &\in Spin_{n}^{\mathbb{C}}\subset \mathbb{C}l_{n},
	\end{split}
\end{equation*}
so the product is well defined on the $\Spc$-Clifford bundles, i.e., Eq.(\ref{product}) induces a $\mathbb{C}l_{n}$-valued map:
\begin{gather*}
\sum\nolimits^{\mathbb{C}}Q\times \sum\nolimits^{\mathbb{C}}Q\rightarrow 
\mathbb{C}l_{n} \\
(\varphi _{1},\varphi _{2})=([p,[\varphi _{1}]],[p,[\varphi _{2}]])\mapsto
\left\langle \left\langle \lbrack \varphi _{1}],[\varphi _{2}]\right\rangle
\right\rangle =\tau ([\varphi _{2}])[\varphi _{1}],
\end{gather*}
where $[\varphi _{1}]$, $[\varphi _{2}]$ are the representative of $\varphi_{1},\varphi _{2}$ in the $Spin^{\mathbb{C}}_n$ frame $p\in P_{Spin^{\mathbb{C}}_n}.$

\begin{lemma}
	The connection $\nabla ^{\Sigma ^{\mathbb{C}}Q}$ is compatible with the	product $\left\langle \left\langle \cdot ,\cdot \right\rangle \right\rangle .$
\end{lemma}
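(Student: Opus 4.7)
The plan is to verify the Leibniz identity
\[
X\langle\langle \varphi_1,\varphi_2\rangle\rangle = \langle\langle \nabla_X^{\Sigma^{\mathbb{C}}Q}\varphi_1,\varphi_2\rangle\rangle + \langle\langle \varphi_1,\nabla_X^{\Sigma^{\mathbb{C}}Q}\varphi_2\rangle\rangle
\]
locally, by choosing a section $p$ of $P_{\Spc_{n}}(Q)$ over an open set $U\subset Q$, writing $\varphi_i=[p,[\varphi_i]]$ with $[\varphi_i]\colon U\to\mathbb{C}l_n$, and using $\langle\langle\varphi_1,\varphi_2\rangle\rangle=\tau([\varphi_2])[\varphi_1]$. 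After this identification the compatibility reduces to a single algebraic property of the local connection $1$-form of $\nabla^{\Sigma^{\mathbb{C}}Q}$.

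The key step is to describe this connection form. Because $\nabla^{\Sigma^{\mathbb{C}}Q}$ is obtained by lifting the Levi--Civita connection on $P_{SO_{n}}(Q)$ together with the $U(1)$-connection $A$ on the determinant bundle (the paper stresses that one needs both), its expression in the regular Clifford representation is
\[
\omega^{\Sigma}(X) \;=\; \tfrac{1}{4}\sum_{i,j}\omega_{ij}(X)\,e_{i}e_{j} \;+\; \tfrac{1}{2}A(X),
\]
with $\omega_{ij}(X)\in\mathbb{R}$ antisymmetric in $i,j$ and $A(X)\in\mathfrak{u}(1)=i\mathbb{R}$. I would then establish the crucial identity
\[
\tau\bigl(\omega^{\Sigma}(X)\bigr) = -\,\omega^{\Sigma}(X).
\]
For the $e_{i}e_{j}$-term, $\tau(e_{i}e_{j})=(-1)^{2}e_{j}e_{i}=-e_{i}e_{j}$ when $i\neq j$, and the real coefficients $\omega_{ij}$ are fixed by $\tau$. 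For the scalar term, $\tau$ restricts to complex conjugation on $\mathbb{C}\subset\mathbb{C}l_n$, and since $\tfrac{1}{2}A(X)$ is purely imaginary it is again negated.

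With this in hand, the Leibniz identity follows by a short computation. Because $X$ is a real derivation and $\tau$ is $\mathbb{R}$-linear, $X$ commutes with $\tau$, so
\[
X\langle\langle\varphi_1,\varphi_2\rangle\rangle = \tau(X[\varphi_2])\,[\varphi_1] + \tau([\varphi_2])\,X[\varphi_1].
\]
Substituting $[\nabla_X^{\Sigma^{\mathbb{C}}Q}\varphi_i] = X[\varphi_i]+\omega^{\Sigma}(X)[\varphi_i]$ and invoking the anti-homomorphism property $\tau(\xi\eta)=\tau(\eta)\tau(\xi)$, one obtains
\[
\langle\langle \nabla_X^{\Sigma^{\mathbb{C}}Q}\varphi_1,\varphi_2\rangle\rangle + \langle\langle \varphi_1,\nabla_X^{\Sigma^{\mathbb{C}}Q}\varphi_2\rangle\rangle = X\langle\langle\varphi_1,\varphi_2\rangle\rangle + \tau([\varphi_2])\bigl(\omega^{\Sigma}(X)+\tau(\omega^{\Sigma}(X))\bigr)[\varphi_1],
\]
and the last summand vanishes by the algebraic identity above. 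The genuine obstacle is precisely that identity: the spin part is identical to the standard $\Sp$-calculation, while the new input, specific to the $\Spc$-setting, is that the $U(1)$-contribution lies in the purely imaginary centre of $\mathbb{C}l_n$ and is therefore reversed by $\tau$.
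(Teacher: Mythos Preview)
Your proof is correct and follows essentially the same approach as the paper: both work in a local $\Spc$-frame, write the connection form as the sum of the spin part $\tfrac{1}{2}\sum_{i<j}\omega_{ij}e_ie_j$ and the $U(1)$ part $\tfrac{1}{2}iA^l$, and use that $\tau$ negates each piece (via $\tau(e_ie_j)=-e_ie_j$ and $\overline{iA^l}=-iA^l$) so that the connection terms cancel in the Leibniz identity. The only difference is cosmetic: you package the cancellation as the single identity $\tau(\omega^{\Sigma}(X))=-\omega^{\Sigma}(X)$, whereas the paper expands both inner products and cancels term by term.
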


\begin{proof}
	Fix $s=(e_{1},...,e_{n}):U\subset M\subset Q\rightarrow P_{SOn}$ a local section of the frame bundle, $l:U\subset M\subset Q\rightarrow P_{S^{1}}$ a local section of the associated $S^1$-principal bundle, $w^{Q}:T(P_{SO(n)})\rightarrow so(n)$ is the Levi-Civita connection of $P_{SO(n)}$ and $iA:TP_{S^{1}}\rightarrow i\mathbb{R}$ is an arbitrary connection on $P_{S^{1}}$, denote by $w^{Q}(ds(X))=(w_{ij}(X))\in so(n),$ $iA(dl(X))=iA^{l}(X).$
	
	If $\psi =[p,[\psi ]]\ $and $\psi ^{\prime }=[p,[\psi ^{\prime }]]$ are	sections of $\sum\nolimits^{\mathbb{C}}Q$ we have: 
	\begin{eqnarray*}
		\nabla _{X}^{\Sigma ^{\mathbb{C}}Q}\psi &=&\left[ p,X([\psi ])+\frac{1}{2} \sum_{i<j}w_{ij}(X)e_{i}e_{j}\cdot \lbrack \psi ]+\frac{1}{2} iA^{l}(X)[\psi ]\right] , \\
		\left\langle \left\langle \nabla _{X}^{\Sigma ^{\mathbb{C}}Q}\psi ,\psi^{\prime }\right\rangle \right\rangle &=&\overline{[\psi ^{\prime }]}\left( X([\psi ])+\frac{1}{2}\sum_{i<j}w_{ij}e_{i}e_{j}\cdot \lbrack \psi]+\frac{1}{2}iA^{l}(X)[\psi ]\right) , \\
		\left\langle \left\langle \psi ,\nabla _{X}^{\Sigma ^{\mathbb{C}}Q}\psi' \right\rangle \right\rangle &=&\overline{\left( X([\psi'])+\frac{1}{2}\sum_{i<j}w_{ij}e_{i}e_{j}[\psi ']+\frac{1}{2}iA^{l}[\psi '] \right) }[\psi ] \\
		&=&\left( X(\overline{[\psi ']})+\frac{1}{2}\sum_{i<j}w_{ij}\overline{e_{i}e_{j}[\psi ']}+\frac{1}{2}	\overline{iA^{l}}\overline{[\psi ']}\right) [\psi ] \\
		&=&\left( X(\overline{[\psi ']})-\frac{1}{2}\sum_{i<j}w_{ij}\overline{[\psi ']}e_{i}e_{j}-\frac{1}{2}iA^{l} \overline{[\psi ']}\right) [\psi ],
	\end{eqnarray*}
	then
	\begin{eqnarray*}
		\left\langle \left\langle \nabla _{X}^{\Sigma ^{\mathbb{C}}Q}\psi ,\psi^{\prime }\right\rangle \right\rangle +\left\langle \left\langle \psi,\nabla _{X}^{\Sigma ^{\mathbb{C}}Q}\psi ^{\prime }\right\rangle\right\rangle &=&\overline{[\psi ^{\prime }]}X(\xi )+X(\overline{[\psi^{\prime }]})[\psi ], \\
		X\left\langle \left\langle \psi ,\psi ^{\prime }\right\rangle \right\rangle&=&X\left( \overline{\xi ^{\prime }}\xi \right) =X(\overline{\xi ^{\prime }})\xi +\overline{\xi ^{\prime }}X(\xi ).
	\end{eqnarray*}
\end{proof}

\begin{lemma}
	The map $\left\langle \left\langle \cdot ,\cdot \right\rangle \right\rangle:\sum\nolimits^{\mathbb{C}}Q\times \sum\nolimits^{\mathbb{C}}Q\rightarrow \mathbb{C}l_{n}$ satisfies:
	
	\begin{enumerate}
		\item $\left\langle \left\langle X\cdot \psi ,\varphi \right\rangle
		\right\rangle =-\left\langle \left\langle \psi ,X\cdot \varphi \right\rangle
		\right\rangle ,~\psi ,\varphi \in \sum\nolimits^{\mathbb{C}}Q,~X\in TQ.$
		
		\item $\tau \left\langle \left\langle \psi ,\varphi \right\rangle
		\right\rangle =\left\langle \left\langle \varphi ,\psi \right\rangle
		\right\rangle ,~\psi ,\varphi \in \sum\nolimits^{\mathbb{C}}Q$
	\end{enumerate}
	
	\begin{proof}This is an easy calculation:
		\begin{enumerate} 
		
			\item $\left\langle \left\langle X\cdot \psi ,\varphi \right\rangle
			\right\rangle =\tau \lbrack \varphi ][X\cdot \psi ]=\tau \lbrack \varphi
			][X][\psi ]=-\tau \lbrack \varphi ]\tau \lbrack X][\psi ]=\left\langle
			\left\langle \psi ,X\cdot \varphi \right\rangle \right\rangle $
			
			\item $\tau \left\langle \left\langle \psi ,\varphi \right\rangle
			\right\rangle =\tau (\tau \lbrack \varphi ][\psi ])=\tau \lbrack \psi
			][\varphi ]=\left\langle \left\langle \varphi ,\psi \right\rangle
			\right\rangle .$
		\end{enumerate}
	\end{proof}
\end{lemma}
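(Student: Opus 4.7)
My plan is to reduce both items to three elementary algebraic facts about $\tau$ on $\mathbb{C}l_n$: (i) $\tau$ is an anti-homomorphism, $\tau(xy)=\tau(y)\tau(x)$; (ii) $\tau$ is an involution, $\tau\circ\tau=\mathrm{id}$; and (iii) $\tau(X)=-X$ for every real Clifford vector $X\in\mathbb{R}^n\subset\mathbb{C}l_n$. All three follow from the defining formula $\tau(a\,e_{i_1}\cdots e_{i_k})=(-1)^k\bar a\,e_{i_k}\cdots e_{i_1}$ by a direct bookkeeping computation on monomials: concatenating two monomials of degrees $k$ and $l$ produces a single sign $(-1)^{k+l}$ together with a full reversal of indices, which is exactly the effect of applying $\tau$ to each factor separately and multiplying in the opposite order. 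Since the excerpt has already verified that $\langle\langle \cdot,\cdot\rangle\rangle$ descends to a well-defined map on sections (via $\tau(g\otimes s)(g\otimes s)=1$ on $\Spc_n$), the entire argument can be carried out pointwise on representatives in a local $\Spc_n$-frame.

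For item 1, I fix frame representatives $[\psi],[\varphi]$ and $[X]$ and compute
\[
\langle\langle X\cdot\psi,\varphi\rangle\rangle = \tau([\varphi])[X][\psi] = -\tau([\varphi])\tau([X])[\psi] = -\tau([X][\varphi])[\psi] = -\langle\langle \psi, X\cdot\varphi\rangle\rangle,
\]
where the second equality invokes (iii) and the third invokes (i). For item 2, an equally short chain gives
\[
\tau\langle\langle \psi,\varphi\rangle\rangle = \tau\bigl(\tau([\varphi])[\psi]\bigr) = \tau([\psi])\,\tau\bigl(\tau([\varphi])\bigr) = \tau([\psi])[\varphi] = \langle\langle \varphi,\psi\rangle\rangle,
\]
using (i) for the second equality and (ii) for the third.

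The only real obstacle lies in establishing (i)--(iii) cleanly: one has to track carefully how the degree sign $(-1)^k$ interacts with both the reversal of indices and the complex conjugation of the scalar when two monomials of differing parities are concatenated, making sure that the signs arising from each factor of $\tau$ combine into the single sign $(-1)^{k+l}$ of the product. Once these anti-involutive properties of $\tau$ are in place, both conclusions of the lemma are immediate, and no further compatibility between $\tau$ and the equivalence-class structure of $\Spc_n$-representatives needs to be verified, since that has already been absorbed into the very definition of $\langle\langle\cdot,\cdot\rangle\rangle$ preceding the lemma.
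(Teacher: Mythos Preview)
Your proof is correct and follows essentially the same route as the paper: both arguments reduce item~1 to $\tau([X])=-[X]$ together with the anti-homomorphism property of $\tau$, and item~2 to $\tau$ being an anti-involution, each verified by a one-line chain of equalities on representatives in a local $\Spc_n$-frame. The only difference is that you make the three properties (i)--(iii) of $\tau$ explicit beforehand, whereas the paper invokes them silently.
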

Note the same idea, product and properties are valid for the bundles $\sum^{%
	\mathbb{C}}Q,$ $\sum^{\mathbb{C}}M$, $\sum^{\mathbb{C}} \nu(M)$, $\sum\nolimits^{%
	\mathbb{C}}M\hat{\otimes}\sum\nolimits^{\mathbb{C}} \nu(M).$

\section{Spinorial Representation of Submanifolds in $\mathbb{S}^{n}$} \label{immersion}

\subsection{Adapted $Spin^{\mathbb{C}}$ groups}

Fix $n=p+q$ and consider the decomposition
$$\mathbb{R}^p \oplus \mathbb{R}^q = \mathbb{R}^n \hookrightarrow \mathbb{R}^n \oplus \mathbb{R} = \mathbb{R}^{n+1}.$$
We have natural inclusions
\begin{eqnarray*}
&& SO(p) \times SO(q) \longrightarrow SO(n) \longrightarrow SO(n+1), \\
&& Spin^{\mathbb{C}}_p \times Spin^{\mathbb{C}}_p \overset{i_1}{\longrightarrow}  Spin^{\mathbb{C}}_{n} \overset{i_2}{\longrightarrow}  Spin^{\mathbb{C}}_{(n+1)} \subset \mathbb{C}l_{(n+1)}.     
\end{eqnarray*}
Since $Spin^{\mathbb{C}}_{(n+1)}$ acts naturally on $\mathbb{C}l_{(n+1)}$ by left multiplication and by adjoint representation
\begin{eqnarray*}
    l:Spin^{\mathbb{C}}_{(n+1)} &\rightarrow& End_{\mathbb{C}}\mathbb{C}l_{(n+1)}\\
    Ad_{(n+1)}: Spin^{\mathbb{C}}_{(n+1)} &\rightarrow& End_{\mathbb{C}}\mathbb{C}l_{(n+1)} 
\end{eqnarray*}
we can define the following representations
\begin{eqnarray*}
       \rho_1 := l \circ i_2 : Spin^{\mathbb{C}}_n &\rightarrow& End_{\mathbb{C}}\mathbb{C}l_{(n+1)},\\
       \rho:=l \circ i_2 \circ  i_1  : Spin^{\mathbb{C}}_p \times Spin^{\mathbb{C}}_p &\rightarrow& End_{\mathbb{C}}\mathbb{C}l_{(n+1)},  \\
       Ad:=Ad_{(n+1)} \circ  i_2 \circ  i_1  : Spin^{\mathbb{C}}_p \times Spin^{\mathbb{C}}_p &\rightarrow& End_{\mathbb{C}}\mathbb{C}l_{(n+1)}.
\end{eqnarray*}

\subsection{Adapted Spinor Bundles}

Since $\mathbb{R}^{n+1}$ is oriented and $Spin^{\mathbb{C}}$ it induces a canonical $Spin^{\mathbb{C}}$ structure on $\mathbb{S}^n \hookrightarrow \mathbb{R}^{n+1}$:

Denote by $P_{SO(n+1)} \left( \mathbb{R}^{n+1} \right) $ the orthonormal frame bundle of $T\mathbb{R}^{n+1}$ and by $\left(  P_{SO(n+1)} \left( \mathbb{R}^{n+1} \right) \right) \big|_{\mathbb{S}^n}$ the adapted orthonormal frame bundle of the isometric immersion $\mathbb{S}^n \hookrightarrow \mathbb{R}^{n+1}$. The respective $Spin^{\mathbb{C}}$ structures are expressed by:
\begin{eqnarray*}
    &\Lambda ^{\mathbb{C}} :P_{Spin_{(n+1)}^{\mathbb{C}}}\left( \mathbb{R}^{n+1} \right) \longrightarrow
	P_{SO(n+1)}\left( \mathbb{R}^{n+1}\right) \times P_{S^{1}}\left( \mathbb{R}^{n+1} \right) , &\\
	 &\Lambda ^{\mathbb{C}}\bigg|_{\mathbb{S}^n} : \left( P_{Spin^{\mathbb{C}}_{(n+1)}}  \mathbb{R}^{n+1} \right) \bigg|_{\mathbb{S}^n}\rightarrow
    \left( P_{SO(n+1)} \mathbb{R}^{n+1} \right) \bigg|_{\mathbb{S}^n} \times \left( P_{S^{1}} \mathbb{R}^{n+1}\right) \bigg|_{\mathbb{S}^n}. &  
\end{eqnarray*}

Let $M$ a $p$-dimensional manifold, $E\rightarrow M$ a real vector bundle of rank $q$, assume that $TM$ and $E$ are oriented and $Spin^{\mathbb{C}}.$ Denote by $P_{SO(p)}(M)$ the frame bundle of $TM$ and by $P_{SO(q)}(E)$ the frame bundle of $E.$ The respective $Spin^{\mathbb{C}}$ structures are defined as
\begin{eqnarray*}
	\Lambda ^{1\mathbb{C}} &:&P_{Spin_{p}^{\mathbb{C}}}(M)\rightarrow
	P_{SO(p)}(M)\times P_{S^{1}}(M), \\
	\Lambda ^{2\mathbb{C}} &:&P_{Spin_q^{\mathbb{C}}}(E)\rightarrow
	P_{SO(q)}(E)\times P_{S^{1}}(E).
\end{eqnarray*}

Finally we are able to define the followig spinor bundles:
\begin{eqnarray*}
 	\sum\nolimits^{\mathbb{C}}\mathbb{R}^{n+1} &:=&  \left( P_{Spin^{\mathbb{C}}_{(n+1)}}\left( \mathbb{R}^{n+1} \right) \right) \times_l \mathbb{C}l_{(n+1)}, \\ 
    \sum\nolimits^{\mathbb{C}}\mathbb{S}^{n} &:=&  \left( P_{Spin^{\mathbb{C}}_{(n+1)}} \mathbb{R}^{n+1}\right) \bigg|_{\mathbb{S}^n}  \times_{\rho_1} \mathbb{C}l_{(n+1)}, \\
	\sum\nolimits^{\mathbb{C}} &:=& \left( P_{Spin^{\mathbb{C}}_p}\times
	_{M}P_{Spin^{\mathbb{C}}_q}\right) \times_\rho \mathbb{C}l_{(n+1)}, \\
	S\sum\nolimits^{\mathbb{C}} &:=& \left( P_{Spin^{\mathbb{C}}_p}\times
	_{M}P_{Spin^{\mathbb{C}}_q}\right) \times_\rho Spin_{(n+1)}^{\mathbb{C}}.
\end{eqnarray*}

In what follows we define as $\nu$ the unit vector field into adapted tangent Bundle $$T^{\mathbb{C}}:=TM \oplus E \oplus \nu := \left( P_{Spin^{\mathbb{C}}_p}\times_{M}P_{Spin^{\mathbb{C}}_q}\right) \times_{Ad} \mathbb{R}^{n+1}$$ as the one that in any spinor frame $p \in \left( P_{Spin^{\mathbb{C}}_p}\times_{M}P_{Spin^{\mathbb{C}}_q}\right) $ is written as $$\nu = [p, f_{n+1}],$$ where $f_{n+1}$ is the constant unit vector of a basis $\{f_1, \cdots f_n \} \cup \{f_{n+1}\} $ of decomposition mentioned before $\mathbb{R}^n \oplus \mathbb{R}.$

The next map will be important to us latter 
\begin{eqnarray*}
  \xi : TM \oplus E \oplus \nu \rightarrow \mathbb{C}l_{n+1}\\
  \xi (X) :=  \left\langle \left\langle X \cdot \varphi,\varphi\right\rangle \right\rangle.
\end{eqnarray*}

\subsubsection{Connection on $P_{S^1}$ Bundle}

We can define the bundle $P_{S^{1}}$ as the one with transition functions defined by product of transition functions of $P_{S^{1}}(M)$ and $P_{S^{1}}(E)$. It is not diffiult to see that there is a canonical bundle morphism: $$\Phi :P_{S^{1}}(M)\times _{M}P_{S^{1}}(E)\rightarrow P_{S^{1}}$$ such that, in any local trivialization, the following diagram comute:
\begin{center}
	\begin{equation*}
	\xymatrixcolsep{1pc}\xymatrixrowsep{1pc}\xymatrix{
		P_{S^{1}}(M)\times _{M}P_{S^{1}}(E) \ar[rr]^{~~~~~~~~~~ \Phi} \ar[dd] & &  P_{S^{1}}  \ar[dd] \\	& \\
		U_{\alpha} \times S^1 \times S^1 \ar[rr]^{~~~~~\phi_\alpha} & & U_{\alpha} \times S^1	}
	\end{equation*}
\end{center}
where $\phi_\alpha(x,r,s)=(x,rs), x\in U_{\alpha}, r,s \in S^1.$

Denote arbitrary connections on $P_{S^{1}}(M)$ and $P_{S^{1}}(E)$ by
$$\mathbf{i} A^{1}:TP_{S^{1}}(M)\rightarrow \mathbf{i} \mathbb{R}, \  \mathbf{i} A^{2}:TP_{S^{1}}(E)\rightarrow \mathbf{i} \mathbb{R}.$$ 
Express local sections by
\begin{eqnarray*}
&s=(e_{1},\cdots,e_{p}):U\rightarrow P_{SO_{p}}(M),& \\ &l_{1}:U\rightarrow P_{S^{1}}(M), \ l_{2}:U\rightarrow P_{S^{1}}(E), \ l=\Phi (l_{1},l_{2}):U\rightarrow P_{S^{1}}.&
\end{eqnarray*}

Now $iA:TP_{S^{1}}\rightarrow i\mathbb{R}$ is the connection
defined by
\begin{eqnarray*}
\mathbf{i}A(d\Phi(l_{1},l_{2}))&=&\mathbf{i}A_{1}(dl_{1})+\mathbf{i}A_{2}(dl_{2}),\\
\mathbf{i}A(dl(X))&:=&\mathbf{i}A^l(X), \ 	X\in TM.
\end{eqnarray*}

\subsection{Main Theorem}
Established the notation we have the following:
\begin{theorem}
	Let $M$ $p$-dimensional manifold, $E\rightarrow M$ a vector bundle of rank $q$, assume that $TM$ and $E$ are oriented and $Spin^{\mathbb{C}}.$ Suppose that $B:TM\times TM\rightarrow E$ is symmetric and bilinear. The following are equivalent:
	
	\begin{enumerate}
		\item There exist a section $\varphi \in \Gamma (S\sum\nolimits^{\mathbb{C}})$ such that 
		\begin{equation*}
		\nabla _{X}^{\Sigma ^{\mathbb{C}}}\varphi =-\frac{1}{2}\sum_{i=1}^{p}e_{i}\cdot B(X,e_{i})\cdot \varphi +\frac{1}{2} X \cdot \nu \cdot \varphi +\frac{1}{2}\mathbf{i}A^{l}(X)\cdot
	\varphi  ,~~\forall
		X\in TM.  \label{killing1}
		\end{equation*}
		
		\item There exist an isometric immersion $F:M\rightarrow \mathbb{S}^{n }$ with normal bundle $E$ and second fundamental form $B$.
	\end{enumerate}
	
	Furthermore, $F=\left\langle \left\langle \nu \cdot \varphi , \varphi \right\rangle \right\rangle \in \mathbb{S}^n \subset \mathbb{R}^{n+1}.$
	
\end{theorem}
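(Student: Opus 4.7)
My plan is to prove the two implications separately, leveraging the Gauss formula \eqref{gaussformula} for one direction and the compatibility of $\nabla^{\Sigma^{\mathbb{C}}}$ with the $\Ccl_{n+1}$-valued pairing $\langle\langle\cdot,\cdot\rangle\rangle$ for the other.

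For $(2)\Rightarrow(1)$, I would regard the isometric immersion as the first factor of the composite $M\hookrightarrow\mathbb{S}^n\hookrightarrow\mathbb{R}^{n+1}$. Because the outward unit normal of $\mathbb{S}^n$ equals the position vector, the vector-valued second fundamental form of $\mathbb{S}^n$ in $\mathbb{R}^{n+1}$ is $-\langle X,Y\rangle\nu$, so composition yields $\tilde B(X,Y)=B(X,Y)-\langle X,Y\rangle\nu$ as the second fundamental form of $M$ inside $\mathbb{R}^{n+1}$. On $\mathbb{R}^{n+1}$ I take the canonical $\Spc$-structure with its flat $S^1$-connection, which admits a parallel spinor $\tilde\varphi$. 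Restricting $\tilde\varphi$ to $M$ and applying \eqref{gaussformula} gives
\begin{equation*}
\nabla^{\Sigma^{\mathbb{C}}}_X\tilde\varphi=-\tfrac{1}{2}\sum_{i=1}^p e_i\cdot\tilde B(e_i,X)\cdot\tilde\varphi=-\tfrac{1}{2}\sum_{i=1}^p e_i\cdot B(X,e_i)\cdot\tilde\varphi+\tfrac{1}{2}X\cdot\nu\cdot\tilde\varphi,
\end{equation*}
producing the first two terms of the desired Killing-type equation. The remaining $\tfrac{1}{2}\mathbf i A^l(X)\cdot\varphi$ term reflects the freedom in the choice of $S^1$-connection on the determinant bundle of $M$: passing from the flat extrinsic connection to a prescribed intrinsic $A^l$ amounts to an $S^1$-valued gauge transformation of $\tilde\varphi$ whose logarithmic derivative absorbs the missing term.

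For $(1)\Rightarrow(2)$, I would set $F:=\langle\langle\nu\cdot\varphi,\varphi\rangle\rangle$ and first verify $F\in\mathbb{S}^n$. In a spinor frame $p$ with $\varphi=[p,g]$ and $g\in\Spc_{n+1}$, the identity $\tau(g)=g^{-1}$ (valid because $\Spc_{n+1}$ elements satisfy $\tau(g)g=1$) gives $F=g^{-1}f_{n+1}g$, i.e.\ the image of $f_{n+1}$ under the adjoint action of $g^{-1}$, which is a unit vector in $\mathbb{R}^{n+1}$. The key computation is $dF(X)$ for $X\in TM$, using the compatibility lemma and the parallelism of $\nu=[p,f_{n+1}]$:
\begin{equation*}
dF(X)=\langle\langle\nu\cdot\nabla^{\Sigma^{\mathbb{C}}}_X\varphi,\varphi\rangle\rangle+\langle\langle\nu\cdot\varphi,\nabla^{\Sigma^{\mathbb{C}}}_X\varphi\rangle\rangle.
\end{equation*}
I substitute the hypothesis of (1) and simplify using properties (1) and (2) of $\langle\langle\cdot,\cdot\rangle\rangle$ together with the anticommutations $e_i\cdot\nu=-\nu\cdot e_i$, $B(X,e_i)\cdot\nu=-\nu\cdot B(X,e_i)$, and $\nu\cdot\nu=-1$. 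Three cancellations then occur: the $B$-contributions cancel because $\tau(e_i\cdot B(X,e_i))=-e_i\cdot B(X,e_i)$; the $\mathbf i A^l$-contributions cancel because $\tau(\mathbf i)=-\mathbf i$; only the $X\cdot\nu$-contribution survives, collapsing to $\langle\langle X\cdot\varphi,\varphi\rangle\rangle=\xi(X)$. Since $\xi$ acts fiberwise as the $\mathbb{R}^{n+1}$-isometry $\mathrm{Ad}_{g^{-1}}$, the equality $dF(X)=\xi(X)$ shows that $F$ is an isometric immersion, with image in $\mathbb{S}^n$ by the first step.

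To identify $E$ as the normal bundle of $F$ in $\mathbb{S}^n$ and $B$ as its second fundamental form, I repeat the derivation for $X\xi(Y)$ with $Y\in TM$ in place of $\nu$. The anticommutation of $Y$ with $e_i$ now produces a surviving factor $2\langle Y,e_i\rangle$ in the $B$-piece, and the analogous arithmetic yields
\begin{equation*}
\nabla^{\mathbb{R}^{n+1}}_X\xi(Y)=\xi(\nabla^M_X Y)+\xi(B(X,Y))-\langle X,Y\rangle F,
\end{equation*}
which is exactly the Gauss formula for the composite $M\hookrightarrow\mathbb{R}^{n+1}$. The tangent-to-$\mathbb{S}^n$ normal component is $\xi(B(X,Y))\in\xi(E)$, confirming simultaneously that $\xi(E)$ is the normal bundle and that $B$ is the second fundamental form of $F$. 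The main obstacle is the Clifford-algebra bookkeeping in these calculations: one must carefully track the three independent anticommutation classes (among $TM$, between $TM$ and $E$, and between $TM\oplus E$ and $\nu$) and use $\tau(\mathbf i)=-\mathbf i$ to kill the $A^l$ term; all other ingredients, namely parallelism of $\nu$, $\Spc$-invariance of $\langle\langle\cdot,\cdot\rangle\rangle$, and the Gauss formula, are already available from Section~2.
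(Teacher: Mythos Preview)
Your overall strategy coincides with the paper's: the Gauss formula for $(2)\Rightarrow(1)$, and the $\Ccl_{n+1}$-valued pairing together with the Killing equation for $(1)\Rightarrow(2)$, including the identification $dF(X)=\xi(X)$ and the computation of $X(\xi(Y))$ to recover $B$. Two points, however, deserve correction or completion.

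For $(2)\Rightarrow(1)$, your handling of the $\tfrac12\mathbf i A^l$ term via an ``$S^1$-valued gauge transformation of $\tilde\varphi$'' does not work in general: a gauge transformation $\tilde\varphi\mapsto u\tilde\varphi$ shifts the equation by $u^{-1}du$, which is necessarily closed, so it can only produce $\tfrac12\mathbf i A^l$ when $A^l$ is exact. The paper avoids this by \emph{not} taking the flat $S^1$-connection on $\mathbb R^{n+1}$. It uses an arbitrary $S^1$-connection there (one restricting to the prescribed $A^l$ on $M$), so that the constant spinor $[\varphi]=1$ already satisfies $\nabla_X^{\Sigma^{\mathbb C}\mathbb R^{n+1}}\varphi=\tfrac12\mathbf i A^l(X)\varphi$; two applications of the Gauss formula (first $\mathbb R^{n+1}\supset\mathbb S^n$, then $\mathbb S^n\supset M$) then yield the Killing equation with the $A^l$ term present from the outset. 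Your single composite Gauss step is a fine shortcut, but the $A^l$ term must come from the ambient $\Spc$-connection, not from a post-hoc gauge move on the spinor.

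For $(1)\Rightarrow(2)$, the paper also verifies that $\xi$ intertwines the given connection $\nabla'$ on $E$ with the normal connection of the immersion, i.e.\ $\nabla'^F_{\xi(X)}\xi(\eta)=\xi(\nabla'_X\eta)$ for $\eta\in\Gamma(E)$. This is the same computation you describe for $X(\xi(Y))$, but with $Y$ replaced by $\eta$; the surviving terms after the $(id-\tau)$ bookkeeping lie in $\xi(TM)$, so the normal projection gives the claim. You should include this step so that ``normal bundle $E$'' holds as a bundle with connection, not merely as a metric bundle.
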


\bigskip

\begin{proof}
	$2)\Rightarrow 1)$ Since $\mathbb{R}^{n+1}$ is contratible there exists a global section $$ s:\mathbb{R}^{n+1}\rightarrow P_{Spin^{\mathbb{C}}_{(n+1)}}\left( \mathbb{R}^{n+1}\right),$$
	with corresponding parallel orthonormal basis 
	\begin{eqnarray*}
	  &&h=(E_{1},\cdots ,E_{n+1}):	\mathbb{R}^{n+1}\rightarrow P_{SO(n+1)}\left( \mathbb{R}^{n+1} \right), \\  
	  && l:\mathbb{R}^{n+1}\rightarrow P_{S^{1}}\left(\mathbb{R}^{n+1}\right), \ \Lambda^{\mathbb{R}^{n+1}}(s)=(h,l).      
	\end{eqnarray*}
	Fix the constant $1=[\varphi ]\in Spin^{\mathbb{C}}_{(n+1)}\subset \mathbb{C}l_{(n+1)}$ and define the spinor field $$\varphi =[s,[\varphi ]]\in \sum\nolimits^{\mathbb{C}}
	\mathbb{R}^{n+1}:=\left(P_{Spin^{\mathbb{C}}_{(n+1)}}\left( \mathbb{R}^{n+1}\right)\right)\times_l \mathbb{C}l_{(n+1)}.$$
	Representing the connection fomrs by
	 $$w^{\mathbb{R}^{n+1}}(dh(X))=(w_{ij}^{h}(X))\in so(n+1), \  \mathbf{i} A(dl(X))=\mathbf{i} A^{l}(X)\in \mathbf{i}\mathbb{R},$$
	 we have
	\begin{eqnarray*}
	\nabla_X^{\Sigma ^{\mathbb{C}}\mathbb{R}^{n+1}}\varphi &=&\left[ s,X([\varphi ])+\left\{ 
	\frac{1}{2}\sum\nolimits_{i<j}w_{ij}^{h}(X)E_{i}E_{j}+\frac{1}{2}%
	\mathbf{i}A^{l}(X)\right\} \cdot \lbrack \varphi ]\right] \notag
	\\
	&=&\left[ s,\frac{1}{2}\mathbf{i}A^{l}(X)\cdot \lbrack \varphi ]\right]. \notag \\
    &=&\frac{1}{2}\mathbf{i}A^{l}(X)\cdot \varphi.	\label{parallel} 
	\end{eqnarray*}

If $\nu$ is the normal vector field of the immersion $\mathbb{S}^n \subset \mathbb{R}^{n+1},$ consider a local adapted orthonormal frame $$\{f_1,\cdots, f_n, \nu\}: U \rightarrow  P_{SO(n+1)}\left(\mathbb{R}^{n+1}\right) \bigg|_{\mathbb{S}^ n}.$$

Denote by $B^{\mathbb{S}^n}:T\mathbb{S}^n \times \mathbb{S}^n \rightarrow \nu (\mathbb{S}^n)$ the second fundamental form of the immersion $\mathbb{S}^n \subset \mathbb{R}^{n+1}$. 

Restricting $\varphi$ in Eq.(\ref{parallel}) to $\sum\nolimits^{\mathbb{C}}\mathbb{S}^{n}$ and applying the gauss formula Eq.(\ref{gaussformula}) we obtain
	\begin{eqnarray}
	\nabla _{X}^{\Sigma ^{\mathbb{C}}\mathbb{R}^{n+1}}\varphi -\nabla _{X}^{\Sigma ^{\mathbb{C}}\mathbb{S}^{n}}\varphi &=&\frac{1}{2}\sum_{i=1}^{n}f_{i}\cdot B^{\mathbb{S}^n}(X,f_{i})\cdot \varphi  \notag
	\\
	\frac{1}{2}\mathbf{i}A^{l}(X)\cdot \varphi -\nabla _{X}^{\Sigma ^{\mathbb{C}}\mathbb{S}^n}\varphi &=&-\frac{1}{2} X \cdot \nu \cdot \varphi  \notag
	\\
	\nabla _{X}^{\Sigma ^{\mathbb{C}}\mathbb{S}^n}\varphi &=&\frac{1}{2} X \cdot \nu \cdot \varphi +\frac{1}{2}\mathbf{i}A^{l}(X)\cdot
	\varphi .  \label{killing}
	\end{eqnarray}

Furthermore, now we can restrict $\varphi$ in Eq.(\ref{killing}) to $S\sum\nolimits^{\mathbb{C}}$ and apply again the gauss formula Eq.(\ref{gaussformula}):
	\begin{eqnarray*}
	\nabla _{X}^{\Sigma ^{\mathbb{C}}\mathbb{S}^{n}}\varphi -\nabla _{X}^{\Sigma ^{\mathbb{C}}}\varphi &=&\frac{1}{2}\sum_{i=1}^{p}e_{i}\cdot B(X,e_{i})\cdot \varphi  \notag
	\\
\frac{1}{2} X \cdot \nu \cdot \varphi +\frac{1}{2}\mathbf{i}A^{l}(X)\cdot
	\varphi -\nabla _{X}^{\Sigma ^{\mathbb{C}}}\varphi &=&\frac{1}{2}\sum_{i=1}^{p}e_{i}\cdot B(X,e_{i})\cdot \varphi  \notag
	\end{eqnarray*}

Then we prove the first part of the theorem
	
	\begin{equation*}
	\nabla _{X}^{\Sigma ^{\mathbb{C}}}\varphi =-\frac{1}{2}\sum_{i=1}^{p}e_{i}\cdot B(X,e_{i})\cdot \varphi +\frac{1}{2} X \cdot \nu \cdot \varphi +\frac{1}{2}\mathbf{i}A^{l}(X)\cdot
	\varphi .  \label{killing2}
	\end{equation*}

	\bigskip
	
	$1)\Rightarrow 2)$ The ideia here is to prove that $F=\left\langle \left\langle \nu \cdot \varphi , \varphi \right\rangle \right\rangle \in \mathbb{S}^n \subset \mathbb{R}^n$ gives us an immersion preserving the metric, the second
	fundamental form and the normal connection. For this purpose, we will
	present the following lemmas:

	\begin{lemma}
	Let $\varphi = [p,[\varphi]] \in \Gamma (S\sum\nolimits^{\mathbb{C}})$ a section satisfying Eq.(\ref{killing1}). Then:
	    \begin{enumerate}
	        \item $F:M \rightarrow \mathbb{S}^n \subset \mathbb{R}^{n+1}.$
	        \item $dF(X) =\xi(X)= \left\langle \left\langle X \cdot \varphi,\varphi\right\rangle \right\rangle, ~~\forall
		X\in TM.$ 
	\end{enumerate}
	    \begin{proof}
	        \begin{enumerate}
	            \item This follow from $$F = \left\langle \left\langle \nu \cdot \varphi,\varphi\right\rangle \right\rangle = \tau[\varphi] f_{n+1} [\varphi]= Ad(\tau[\varphi])(f_{n+1}) \in \mathbb{S}^{n} \subset \mathbb{R}^{n+1}.$$ 
	            
	            \item First note that, since $$\nu = [p,f_{n+1}] \in  \Gamma \bigg( \left( P_{Spin^{\mathbb{C}}_p}\times_{M}P_{Spin^{\mathbb{C}}_q}\right) \times_{Ad} \mathbb{R}^{n+1} \bigg) $$ we have 
	            $$\nabla^{T^{\mathbb{C}}}_X \nu = \left[p, X(f_{n+1})+ Ad_\ast \left(\omega^{\mathbb{C}}\left(dp(X) \right) \right) \left(f_{n+1} \right) \right]=0,$$
	            since $f_{n+1}$ is constant and
	            \begin{eqnarray*}
	            &&Ad(p)(f_{n+1}) = f_{n+1},~\forall p\in P_{Spin^{\mathbb{C}}_p}\times_{M}P_{Spin^{\mathbb{C}}_q}\\
	            &&\omega^{\mathbb{C}}: T \left( P_{Spin^{\mathbb{C}}_p}\times_{M}P_{Spin^{\mathbb{C}}_q}\right) \rightarrow so(n) \oplus \mathbf{i} \mathbb{R} \subset \mathbb{C}l_n. 
	            \end{eqnarray*}
	           
	            Finally:
	            \begin{eqnarray*}
	                  dF(X)&=& X(F) = X \big(\left\langle \left\langle \nu \cdot \varphi,\varphi\right\rangle \right\rangle \big)  
	                  \\
	                   &=&\left\langle \left\langle \nu \cdot \nabla^{\Sigma^{C}}\varphi,\varphi\right\rangle \right\rangle + \left\langle \left\langle \nu \cdot \varphi,\nabla^{\Sigma^{C}}_X \varphi\right\rangle \right\rangle
	                   \\
	                   &=& \left( Id - \tau \right)\left\langle \left\langle \nu \cdot \nabla_X \varphi,\varphi\right\rangle \right\rangle
	                   \\
	                   &=&-\frac{1}{2} \left( Id - \tau \right) \sum_{i=1}^{p} \left\langle \left\langle \nu \cdot e_{i}\cdot B(X,e_{i})\cdot \varphi ,\varphi\right\rangle \right\rangle 
	                   \\
	                   &&+ \frac{1}{2} \left( Id - \tau \right) \left\langle \left\langle \nu \cdot X \cdot \nu \cdot \varphi,\varphi\right\rangle \right\rangle
	                   \\
	                   &&+ \frac{1}{2} \left( Id - \tau \right) \left\langle \left\langle \mathbf{i}A^{l}(X) \nu \cdot \varphi,\varphi\right\rangle \right\rangle .
	            \end{eqnarray*}
	         
	         Using that $\nu , e_i, B(X,e_i)$ are mutually orthogonal we get 
	         \begin{eqnarray*}
	            \left\langle \left\langle \nu \cdot e_{i}\cdot B(X,e_{i})\cdot \varphi ,\varphi\right\rangle \right\rangle  &=&  \tau \bigg( \left\langle \left\langle \nu \cdot e_{i}\cdot B(X,e_{i})\cdot \varphi ,\varphi\right\rangle \right\rangle \bigg) 
	            \\
	            \left\langle \left\langle  X  \cdot \varphi,\varphi\right\rangle \right\rangle &=& -\tau \bigg( \left\langle \left\langle  X  \cdot \varphi,\varphi\right\rangle \right\rangle \bigg)
	            \\
	            \left\langle \left\langle \mathbf{i}A^{l}(X) \nu \cdot \varphi,\varphi\right\rangle \right\rangle &=& \tau \bigg(\left\langle \left\langle \mathbf{i}A^{l}(X) \nu \cdot \varphi,\varphi\right\rangle \right\rangle \bigg)
	         \end{eqnarray*}
	         Then:
	         $$dF(X) = \left\langle \left\langle X \cdot \varphi,\varphi\right\rangle \right\rangle, ~~\forall
		X\in TM. $$

	        \end{enumerate}
	    \end{proof}

	\end{lemma}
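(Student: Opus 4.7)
For part (1), I would simply unwind the definitions. Because $\varphi$ is a section of the $\mathrm{Spin}^{\mathbb{C}}_{(n+1)}$-subbundle $S\Sigma^{\mathbb{C}}$, its representative $[\varphi]$ lies in $\mathrm{Spin}^{\mathbb{C}}_{(n+1)} \subset \mathbb{C}l_{(n+1)}$; combined with $\nu = [p, f_{n+1}]$, the definition of the $\mathbb{C}l_n$-valued bracket yields $F = \tau[\varphi]\cdot f_{n+1}\cdot [\varphi]$. Since $\tau$ restricted to $\mathrm{Spin}^{\mathbb{C}}_{(n+1)}$ coincides with the group inverse, this is exactly $\mathrm{Ad}([\varphi]^{-1})(f_{n+1})$. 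The adjoint action preserves the vector subspace $\mathbb{R}^{n+1}\subset \mathbb{C}l_{(n+1)}$ and acts on it by isometries, so $F\in \mathbb{R}^{n+1}$ with $\|F\| = \|f_{n+1}\| = 1$, proving $F\in\mathbb{S}^n$.

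For part (2), the plan is to differentiate $F$ using compatibility of $\nabla^{\Sigma^{\mathbb{C}}}$ with $\langle\langle\cdot,\cdot\rangle\rangle$ (the first lemma above) together with the parallelism of $\nu$. The latter is essentially built in: since $\nu = [p, f_{n+1}]$ with $f_{n+1}$ constant and $\mathrm{Ad}$-fixed by $\mathrm{Spin}^{\mathbb{C}}_p \times \mathrm{Spin}^{\mathbb{C}}_q$, one reads off $\nabla^{T^{\mathbb{C}}}\nu = 0$. This produces
\begin{equation*}
dF(X) = \langle\langle \nu\cdot \nabla^{\Sigma^{\mathbb{C}}}_X\varphi,\varphi\rangle\rangle + \langle\langle \nu\cdot\varphi, \nabla^{\Sigma^{\mathbb{C}}}_X\varphi\rangle\rangle.
\end{equation*}
Combining properties (1) and (2) of the bracket rewrites the second summand as $-\tau\langle\langle \nu\cdot\nabla_X\varphi,\varphi\rangle\rangle$, so
\begin{equation*}
dF(X) = (\mathrm{Id}-\tau)\,\langle\langle \nu\cdot\nabla^{\Sigma^{\mathbb{C}}}_X\varphi,\varphi\rangle\rangle.
\end{equation*}

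Now I would substitute Eq.~(\ref{killing1}) and analyze the three summands by $\tau$-parity, using the basic identity $\tau\langle\langle a\cdot\varphi,\varphi\rangle\rangle = \langle\langle \tau(a)\cdot\varphi,\varphi\rangle\rangle$. The mutual orthogonality of $\nu$, $X$, $e_i$ and $B(X,e_i)$ in $\mathbb{R}^{n+1}$ implies that the Clifford products $\nu\cdot e_i\cdot B(X,e_i)$ and $\mathbf{i}A^l(X)\,\nu$ are both $\tau$-fixed, so the first and third contributions are annihilated by $\mathrm{Id}-\tau$. The middle contribution collapses through $\nu\cdot X\cdot \nu = X$ (since $X\perp\nu$ and $\nu^2 = -1$); since $\tau(X) = -X$ it is doubled by $\mathrm{Id}-\tau$, producing exactly $\langle\langle X\cdot\varphi,\varphi\rangle\rangle = \xi(X)$.

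The main obstacle is the $\tau$-parity bookkeeping: one must carefully track the complex conjugation of $\mathbf{i}A^l(X)$ and the sign changes from reversing strings of mutually orthogonal vectors under $\tau$. Once those two calculations are in hand, the rest of the argument is a mechanical application of the compatibility lemma and of the definition of $F$.
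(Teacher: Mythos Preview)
Your proposal is correct and follows essentially the same approach as the paper: the adjoint-action interpretation of $F$ for part (1), and for part (2) the parallelism of $\nu$, compatibility of $\nabla^{\Sigma^{\mathbb{C}}}$ with $\langle\langle\cdot,\cdot\rangle\rangle$, the rewriting via $(\mathrm{Id}-\tau)$, and the $\tau$-parity analysis of the three summands coming from Eq.~(\ref{killing1}). Your explicit observations that $\tau$ is the group inverse on $\mathrm{Spin}^{\mathbb{C}}_{(n+1)}$ and that $\nu\cdot X\cdot\nu = X$ are exactly the ingredients the paper uses (more implicitly) at the corresponding steps.
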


	\begin{lemma}
		\begin{enumerate} With  notations above the following statements are valid
			\item The map $F:M\rightarrow \mathbb{S}^n \subset \mathbb{R}^{n+1},$ is an isometry.
			
			\item The map%
			\begin{eqnarray*}
				\Phi _{E} &:&E\rightarrow F(M)\times \mathbb{R}^{n+1} \\
				X &\in &E_{m}\mapsto (F(m),\xi (X))
			\end{eqnarray*}%
			is an isometry between $E$ and the normal bundle of $F(M)$ into $\mathbb{S}^{ n },$ preserving connections and second fundamental forms.
		\end{enumerate}
		
		\begin{proof}
			\begin{enumerate}
				\item Let $X,Y\in \Gamma (TM\oplus E \oplus \nu \mathbb{R}),$ consequently%
				\begin{eqnarray*}
				\left\langle \xi (X),\xi (Y)\right\rangle &=&-\frac{1}{2}\left( \xi (X)\xi
				(Y)+\xi (Y)\xi (X)\right) \notag \\ &=&-\frac{1}{2}\left( \tau \lbrack \varphi
				][X][\varphi ]\tau \lbrack \varphi ][Y][\varphi ]+\tau \lbrack \varphi
				][Y][\varphi ]\tau \lbrack \varphi ][X][\varphi ]\right) \notag \\
				&=&-\frac{1}{2}\tau \lbrack \varphi ]\left( [X][Y]+[Y][X]\right) [\varphi
				]=\tau \lbrack \varphi ]\left( \left\langle X,Y\right\rangle \right)
				[\varphi ] \notag \\
				&=&\left\langle X,Y\right\rangle \tau \lbrack \varphi ][\varphi
				]=\left\langle X,Y\right\rangle .
				\end{eqnarray*}
				This implies that $F$ is an isometry, and that $\Phi _{E}$ is a bundle map
				between $E$ and the normal bundle of $F(M)$ into $\mathbb{S}^{n}$ which
				preserves the metrics of the fibers. Note that $(F(m), \xi(\nu))$ is orthogonal to $\mathbb{S}^n.$
				
				\item Denote by $B_{F}$ and $\nabla ^{\prime F}$ the second fundamental form
				and the normal connection of the immersion $F$. We want to show that:%
				\begin{eqnarray*}
					&&i)\xi (B(X,Y)) =B_{F}(\xi (X),\xi (Y)), \\
					&&ii)\xi (\nabla _{X}^{\prime }\eta ) =(\nabla _{\xi (X)}^{\prime F}\xi
					(\eta )),
				\end{eqnarray*}%
				for all $X,Y\in \Gamma (TM)$ and $\eta \in \Gamma (E)$.
				
				$i)$ First note that:
				\begin{equation*}
				B^{F}(\xi (X),\xi (Y)):=\{\nabla _{\xi (X)}^{F}\xi (Y)\}^{\bot }=\{X(\xi
				(Y))\}^{\bot },
				\end{equation*}
				where the superscript $\bot $ means that we consider the component of the
				vector which is normal to the immersion and tangent to $\mathbb{S}^n$.
				
				Supouse that in $x_{0}\in M$, $\nabla ^{M}X=\nabla ^{M}Y=0,$
				to simplify write $\nabla _{X}^{\Sigma ^{\mathbb{C}}}\varphi =\nabla
				_{X}\varphi $ and $\nabla ^{M}X=\nabla X$,
				\begin{eqnarray*}
					X(\xi (Y)) &=&\left\langle \left\langle Y\cdot \nabla _{X}\varphi ,\varphi
					\right\rangle \right\rangle +\left\langle \left\langle Y\cdot \varphi
					,\nabla _{X}\varphi \right\rangle \right\rangle \\
					 &=& (id-\tau )\left\langle
					\left\langle Y\cdot \varphi ,\nabla _{X}\varphi \right\rangle \right\rangle
					\\
				     &=&(id-\tau )\left\langle \left\langle \varphi ,\frac{1}{2}%
					\sum_{j=1}^{p}Y\cdot e_{j}\cdot B(X,e_{j})\cdot \varphi  -\frac{1}{2}Y \cdot X \cdot \nu \cdot \varphi -\frac{1}{2}\mathbf{i}A^{l}(X)Y\cdot \varphi \right\rangle \right\rangle
					\end{eqnarray*}
					\begin{eqnarray*}
			    	&=&(id-\tau )\left\langle \left\langle \varphi ,\frac{1}{2}\left(\sum_{j=1}^{p}\sum_{k=1}^{p}y^{k}e_{k}\cdot e_{j}\cdot
			    	B(X,e_{j})  -\frac{1}{2}\sum_{j=1}^{p}\sum_{k=1}^{p}y^{k}x^j e_{k}\cdot  e_{j}\cdot \nu \cdot \varphi  -\mathbf{i}A^{l}(X)Y \right) \cdot \varphi \right\rangle \right\rangle \\
			    	 &=&(id-\tau )\left\langle \left\langle \varphi ,\frac{1}{2}\left(
			    	-\sum_{j=1}^{p}y^{j}\cdot
			    	B(X,e_{j}) +\sum_{j=1}^{p}y^{j}x^j  \nu   +\sum_{j=1}^{p}\sum_{k=1,k\neq j}^{p}y^{k}e_{k}\cdot e_{j}\cdot \left( B(X,e_{j})- x^j \nu \right) -\mathbf{i}A^{l}(X)Y\right) \cdot \varphi
			    	\right\rangle \right\rangle
			    	\\
			    	  &=&(id-\tau )\left\langle \left\langle \varphi ,\frac{1}{2}\left(
			    	-B(X,Y)+D\right) \cdot \varphi \right\rangle \right\rangle 
			    	 + \left\langle Y,X \right\rangle  \left\langle \left\langle \varphi ,  \nu \cdot \varphi   \right\rangle \right\rangle,
			    	\end{eqnarray*}
				where
				\begin{eqnarray*}
					D &=& \sum_{j=1}^{p}\sum_{k=1,k\neq j}^{p}y^{k}e_{k}\cdot
					e_{j}\cdot \left( B(X,e_{j})- x^j \nu \right) -\mathbf{i}A^l(X)Y \\
					\tau \lbrack D] &=&[D].
				\end{eqnarray*}
				Consequently
				\begin{eqnarray*}
					X(\xi (Y)) &=&\frac{1}{2}(id-\tau )\left\langle \left\langle \varphi ,\left(
					-B(X,Y)+D\right) \cdot \varphi \right\rangle \right\rangle + \left\langle Y,X \right\rangle  \left\langle \left\langle \varphi ,  \nu \cdot \varphi   \right\rangle \right\rangle \\
					&=&-\tau \lbrack \varphi ]\tau \lbrack B(X,Y)][\varphi ] + \left\langle Y,X \right\rangle  \left\langle \left\langle \varphi ,  \nu \cdot \varphi   \right\rangle \right\rangle\\
					&=&\left\langle
					\left\langle \varphi ,B(X,Y)\cdot \varphi \right\rangle \right\rangle + \left\langle Y,X \right\rangle  \left\langle \left\langle \varphi ,  \nu \cdot \varphi   \right\rangle \right\rangle\\ 
					&=&\xi (B(X,Y))  + \left\langle Y,X \right\rangle  \xi(\nu).
				\end{eqnarray*}
				Therefore we conclude
				\begin{eqnarray*}
				B^{F}(\xi (X),\xi (Y))&:=&\{\nabla _{\xi
						(X)}^{F}\xi (Y)\}^{\bot }=\{X(\xi (Y))\}^{\bot } \\
					&=&\{\xi (B(X,Y))  + \left\langle Y,X \right\rangle \}^{\bot }=\xi (B(X,Y)),
				\end{eqnarray*}
				here we used the fact that $F $ is an isometry $$B(X,Y)\in
				E, \xi (B(X,Y))\in TF(M)^{\bot }, \xi(\nu)\in \{ T\mathbb{S}^n \}^{\bot}.$$ Then $i)$ follows.
				
				$ii)$ First note that
				\begin{eqnarray*}
				\nabla _{\xi (X)}^{\prime F}\xi (\eta )&=&\left\{ X(\xi (\eta ))\right\} ^{\bot
				}=\left\{ X\left\langle \left\langle \eta \cdot \varphi ,\varphi
				\right\rangle \right\rangle \right\} ^{\bot } \notag \\ &=& \left\langle \left\langle
				\nabla _{X}\eta \cdot \varphi ,\varphi \right\rangle \right\rangle ^{\bot
				}+\left\langle \left\langle \eta \cdot \nabla _{X}\varphi ,\varphi
				\right\rangle \right\rangle ^{\bot }+\left\langle \left\langle \eta \cdot
				\varphi ,\nabla _{X}\varphi \right\rangle \right\rangle ^{\bot }.
				\end{eqnarray*}%
				I will show that:%
				\begin{equation*}
				\left\langle \left\langle \eta \cdot \nabla _{X}\varphi ,\varphi
				\right\rangle \right\rangle ^{\bot }+\left\langle \left\langle \eta \cdot
				\varphi ,\nabla _{X}\varphi \right\rangle \right\rangle ^{\bot }=0.
				\end{equation*}%
				In fact
				\begin{eqnarray*}
				&&\left\langle \left\langle \eta \cdot \nabla_{X} \varphi ,\varphi \right\rangle \right\rangle +\left\langle \left\langle \eta \cdot \varphi
				,\nabla _{X}\varphi \right\rangle \right\rangle = (id-\tau )\left\langle \left\langle \eta \cdot \nabla _{X}\varphi
				,\varphi \right\rangle \right\rangle \\ 
			 &=&(-id+\tau )\left\langle \left\langle \left[ \frac{1}{2}	\sum_{j=1}^{p}\eta \cdot e_{j}\cdot B(X,e_{j})\cdot \varphi  -\frac{1}{2}\eta \cdot X \cdot \nu \cdot \varphi -\frac{1}{2}\mathbf{i}A^{l}(X)\eta \cdot \varphi \right] ,\varphi \right\rangle \right\rangle 
				\\
				 &=&(-id+\tau )\left\langle \left\langle \left[ -\frac{1}{2}
				\sum_{j=1}^{p}\sum_{s=1}^{q}\sum_{k=1}^{q}a^{s}b_{j}^{k}e_{j}\cdot f_{s}\cdot f_{k}  -\frac{1}{2} \eta \cdot X \cdot \nu  -\frac{1}{2}	\mathbf{i}A^{l}(X)\eta \right] \cdot \varphi ,\varphi \right\rangle \right\rangle 
				\\
				 &=& (-id+\tau )\left\langle \left\langle \left[ \frac{1}{2}
				\sum_{j=1}^{p}\sum_{s=1}^{q}a^{s}b_{j}^{s}e_{j}  -\frac{1}{2}\sum_{j=1}^{p}\sum_{s=1}^{q}\sum_{k=1,k\neq
					s}^{q}a^{s}b_{j}^{k}e_{j}\cdot f_{s}\cdot f_{k} -\frac{1}{2} \eta \cdot X \cdot \nu  -\frac{1}{2}\mathbf{i}A^{l}(X)\eta\right]
				\cdot \varphi ,\varphi \right\rangle \right\rangle ,
				\end{eqnarray*}
				from what
				\begin{eqnarray*}
					&&\left\langle \left\langle \eta \cdot \nabla _{X}\varphi ,\varphi \right\rangle
					\right\rangle +\left\langle \left\langle \eta \cdot \varphi ,\nabla _{X}\varphi
					\right\rangle \right\rangle \\
					&=&2 \tau \lbrack \varphi ][\frac{1}{2}\sum_{j=1}^{p}
					\sum_{s=1}^{q}a^{s}b_{j}^{s}e_{j}][\varphi ] 
					=\tau \lbrack \varphi][\sum_{j=1}^{p}\sum_{s=1}^{q}a^{s}b_{j}^{s}e_{j}][\varphi ]
					\\
					&=&\tau \lbrack
					\varphi ][\zeta][\varphi ]=:\xi (\zeta)\in TF(M) \\
					&\Rightarrow &\left\langle \left\langle \eta \cdot \nabla _{X}\varphi
					,\varphi \right\rangle \right\rangle ^{\bot }+\left\langle \left\langle \eta
					\cdot \varphi ,\nabla _{X}\varphi \right\rangle \right\rangle ^{\bot }=0.
				\end{eqnarray*}%
				In conclusion%
				\begin{equation*}
				\nabla _{\xi (X)}^{F}\xi (\eta )=\left\langle
				\left\langle \nabla _{X}\eta \cdot \varphi ,\varphi \right\rangle
				\right\rangle ^{\bot }=\left( \xi (\nabla _{X}\eta )\right)^{\bot }=\xi (\nabla
				_{X}^{\prime }\eta ).
				\end{equation*}%
				At the end $ii)$ follows.
			\end{enumerate}
		
	\end{proof}
	
\end{lemma}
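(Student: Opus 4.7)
The plan is to reduce both the isometry claim for $F$ and the metric preservation claim for $\Phi_E$ to one purely algebraic Clifford computation, and then to derive the preservation of $B$ and of the normal connection from the Killing equation in item (1) combined with compatibility of $\nabla^{\Sigma^{\mathbb{C}}}$ with $\langle\langle\cdot,\cdot\rangle\rangle$.

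First, from the previous lemma I have $dF(X) = \xi(X) = \tau[\varphi][X][\varphi]$, and since $\varphi$ takes values in $Spin^{\mathbb{C}}_{(n+1)}$ we have $\tau[\varphi][\varphi] = 1$. Using the Clifford relation $[X][Y]+[Y][X] = -2\langle X,Y\rangle$ in $\mathbb{C}l_{(n+1)}$,
\begin{equation*}
\xi(X)\xi(Y)+\xi(Y)\xi(X) = \tau[\varphi]\bigl([X][Y]+[Y][X]\bigr)[\varphi] = -2\langle X,Y\rangle,
\end{equation*}
for all $X,Y \in TM\oplus E\oplus \nu\mathbb{R}$. Hence $\langle \xi(X),\xi(Y)\rangle_{\mathbb{R}^{n+1}} = \langle X,Y\rangle$, which gives the isometry statement for $F$ and the fiber-metric preservation of $\Phi_E$. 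Moreover $\xi(\nu) = \tau[\varphi]f_{n+1}[\varphi] = F(m)$ is the position vector of $F(m)\in\mathbb{S}^n$, hence orthogonal to $T_{F(m)}\mathbb{S}^n$; and since $E\perp TM$ and $E\perp \nu$, the same bilinear formula forces $\xi(E_m)\subset T_{F(m)}\mathbb{S}^n$ and $\xi(E_m)\perp dF(T_mM)$, so $\Phi_E$ takes values in the normal bundle of $F(M)$ inside $\mathbb{S}^n$.

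For the preservation of the second fundamental form, I would use $B^F(\xi(X),\xi(Y)) = \{X(\xi(Y))\}^{\bot}$, where $\{\cdot\}^{\bot}$ is the projection onto the normal bundle of $F(M)$ in $\mathbb{S}^n$. Compatibility of $\nabla^{\Sigma^{\mathbb{C}}}$ with $\langle\langle\cdot,\cdot\rangle\rangle$ and the identity $\tau\langle\langle\psi,\psi'\rangle\rangle=\langle\langle\psi',\psi\rangle\rangle$ yield $X(\xi(Y)) = (\mathrm{id}-\tau)\langle\langle Y\cdot\varphi,\nabla_X\varphi\rangle\rangle$. Substituting the Killing equation from (1) and expanding $Y=\sum_k y^k e_k$, the diagonal $k=j$ piece of $e_k e_j B(X,e_j)$ produces $-B(X,Y)$, whose image under $\xi$ is the desired $E$-valued normal contribution, while the $k\neq j$ terms together with the $\nu$- and $\mathbf{i}A^l$-contributions assemble into a $\tau$-invariant element annihilated by $(\mathrm{id}-\tau)$. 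A residual term of the form $\langle X,Y\rangle\,\xi(\nu) = \langle X,Y\rangle F(m)$ is normal to $\mathbb{S}^n$ itself and is therefore killed by the $\bot$-projection, leaving $B^F(\xi(X),\xi(Y)) = \xi(B(X,Y))$.

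For the normal connection, the same differentiation gives
\begin{equation*}
\nabla^{\prime F}_{\xi(X)}\xi(\eta) = \bigl\{\xi(\nabla_X\eta) + \langle\langle \eta\cdot\nabla_X\varphi,\varphi\rangle\rangle + \langle\langle \eta\cdot\varphi,\nabla_X\varphi\rangle\rangle\bigr\}^{\bot}.
\end{equation*}
The plan is to show the last two terms sum to an element of $dF(TM)$, so that they vanish under $\bot$. Expanding $\eta=\sum_s a^s f_s$ and $B(X,e_j)=\sum_k b_j^k f_k$ in adapted frames and substituting the Killing equation, the diagonal $s=k$ piece of $e_j\cdot f_s\cdot f_k$ collapses (using $f_sf_s=-1$) to the tangential vector $\zeta = \sum_{j,s} a^s b_j^s e_j \in TM$, while the $s\neq k$ pieces together with the $\eta\cdot X\cdot\nu$ and $\mathbf{i}A^l\eta$ contributions combine into a $\tau$-invariant element killed by $(\mathrm{id}-\tau)$. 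Only $\xi(\zeta)\in dF(TM)$ survives, which vanishes under $\bot$, giving $\nabla^{\prime F}_{\xi(X)}\xi(\eta) = \xi(\nabla'_X\eta)$.

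The main obstacle is the Clifford bookkeeping: at each stage one must separate three kinds of terms — those that are $\tau$-symmetric and hence killed by $\mathrm{id}-\tau$; those of vector form lying in $dF(TM)\oplus \mathbb{R}\xi(\nu)$, killed by the normal projection onto $T\mathbb{S}^n\ominus dF(TM)$; and those that are the genuine $E$-valued normal or tangential contributions. The key new subtlety compared to the $\mathbb{R}^{n+1}$ case treated by Bayard et al. is carefully tracking the $X\cdot\nu$ term in the Killing equation, which produces exactly the $\langle X,Y\rangle F(m)$ residue responsible for embedding the image into $\mathbb{S}^n$ rather than $\mathbb{R}^{n+1}$.
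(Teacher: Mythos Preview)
Your proposal is correct and follows essentially the same route as the paper: the metric statement via the Clifford anticommutation and $\tau[\varphi][\varphi]=1$; the second fundamental form by applying $(\mathrm{id}-\tau)$ to $\langle\langle Y\cdot\varphi,\nabla_X\varphi\rangle\rangle$, substituting the Killing equation, isolating the diagonal $-B(X,Y)$ piece, collecting the off-diagonal and $\mathbf{i}A^l$ terms into a $\tau$-fixed element $D$, and discarding the residual $\langle X,Y\rangle\,\xi(\nu)$ as normal to $\mathbb{S}^n$; and the normal connection by the analogous expansion producing the tangential vector $\zeta=\sum_{j,s}a^s b_j^s e_j$. This is exactly the paper's argument, including the bookkeeping you flag as the main obstacle.
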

	
	With these Lemmas the theorem is proved.

\end{proof}

% ------------------------------------------------------------------------
\end{document}